\newtheorem{theorem}{Theorem}[section]
\newtheorem{proposition}[theorem]{Proposition}
\theoremstyle{definition}
\newtheorem{remark}[theorem]{Remark}
\newtheorem{example}[theorem]{Example}
\newcommand{\xysquare}[8]{
\[\xymatrix{
#1 \ar@{#5}[r] \ar@{#6}[d] & #2 \ar@{#7}[d]\\
#3 \ar@{#8}[r] & #4
}\]
}
\DeclareMathOperator*{\projlimf}{``\varprojlim''}
\DeclareMathOperator*{\holim}{\operatorname*{holim}}
\newcommand{\bb}{\mathbb}
\newcommand{\blob}{\bullet}
\newcommand{\comment}[1]{}
\newcommand{\ep}{\varepsilon}
\newcommand{\into}{\hookrightarrow}
\newcommand{\isoto}{\stackrel{\simeq}{\to}}
\newcommand{\op}{\operatorname}
\renewcommand{\phi}{\varphi}
\newcommand{\quis}{\stackrel{\sim}{\to}}
\newcommand{\res}{\overline}
\newcommand{\roi}{\mathcal{O}}
\newcommand{\sub}[1]{\mathrm{\scriptsize #1}}
\newcommand{\To}{\longrightarrow}
\newcommand{\ul}[1]{\underline{#1}}
\renewcommand{\cal}{\mathcal}
\renewcommand{\hat}{\widehat}
\renewcommand{\frak}{\mathfrak}
\renewcommand{\tilde}{\widetilde}
\renewcommand{\projlim}{\varprojlim}
\DeclareMathOperator{\dlog}{dlog}
\DeclareMathOperator{\Spa}{Spa}
\newcommand{\dotimes}{\otimes^{\bb L}}
\begin{document}

\title{Integral $p$-adic Hodge theory -- announcement}

\author{B.~Bhatt, M.~Morrow, P.~Scholze}

%\classno{}
\date{}
%\extraline{}

\maketitle

\section{Introduction}
Let $C$ be an algebraically closed, nonarchimedean field of mixed characteristic which is complete under a rank one valuation; let $\roi\subseteq C$ be its ring of integers, with maximal ideal $\frak m$ and residue field $k$. The reader may assume that $C$ is the completion of an algebraic closure of $\bb Q_p$.

The main aim of this note is to outline a proof of the following result, which was first announced by the third author during his series of Fall 2014 lectures at the MSRI. Details, generalisations, and further results will be presented in a forthcoming article. In particular, this will include a ``comparison isomorphism''-style result.

\begin{theorem}
Let $\frak X$ be a proper, smooth, formal scheme over $\roi$, and let $i\ge 0$. Assume that the crystalline cohomology $H^i_\sub{crys}(\frak X_k/W(k))$ of its special fibre $\frak X_k$ is torsion-free. Then the $p$-adic \'etale cohomology $H^i_\sub{\acute et}(X,\bb Z_p)$ of its generic fibre $X:=\frak X_C$ is also torsion-free.
\end{theorem}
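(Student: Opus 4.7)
The plan is to construct an ``$A_\inf$-valued cohomology theory'' $R\Gamma_{A_\inf}(\frak X)$, taking values in perfect complexes over Fontaine's period ring $A_\inf=W(\roi^\flat)$, that simultaneously recovers both the $p$-adic \'etale cohomology of the generic fibre $X$ and the crystalline cohomology of the special fibre $\frak X_k$ after suitable base change. A natural candidate is a $q$-deformation of the de Rham complex of a local smooth lift of $\frak X$ together with a well-chosen coordinate system; alternatively one may use Faltings's nearly \'etale techniques on the pro-\'etale site of the adic generic fibre. Either way, the output should be a functorial perfect complex of $A_\inf$-modules whose cohomology groups $H^i_{A_\inf}(\frak X)$ are finitely presented over $A_\inf$.

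The central structural inputs are then two specialization theorems. First, a crystalline comparison: base change along the canonical map $A_\inf\to W(k)$ (coming from $\roi^\flat\to k$) should yield
\[
R\Gamma_{A_\inf}(\frak X)\dotimes_{A_\inf}W(k)\simeq R\Gamma_\sub{crys}(\frak X_k/W(k)).
\]
Second, an \'etale comparison: after inverting Fontaine's element $\mu=[\ep]-1\in A_\inf$ (so that the $p$-adic completion of $A_\inf[\tfrac1\mu]$ is $W(C^\flat)$), one should have
\[
R\Gamma_{A_\inf}(\frak X)\dotimes_{A_\inf}A_\inf[\tfrac1\mu]\simeq R\Gamma_\sub{\acute et}(X,\bb Z_p)\dotimes_{\bb Z_p}A_\inf[\tfrac1\mu].
\]
Constructing $R\Gamma_{A_\inf}$ and establishing these two comparisons is a substantial task, and I expect it to be the main technical obstacle of the whole argument.

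Granting this apparatus, the theorem reduces to a question about the finitely presented $A_\inf$-module $M:=H^i_{A_\inf}(\frak X)$. Since $W(C^\flat)$ is a complete DVR with uniformizer $p$ that is faithfully flat over $\bb Z_p$, and since $H^i_\sub{\acute et}(X,\bb Z_p)$ is a finitely generated $\bb Z_p$-module, torsion-freeness of $H^i_\sub{\acute et}(X,\bb Z_p)$ is equivalent to $M\otimes_{A_\inf}W(C^\flat)$ being a finite free $W(C^\flat)$-module. On the crystalline side, a universal coefficients argument (to absorb a possible $\Tor^1$ contribution from $H^{i+1}_{A_\inf}(\frak X)$) combined with the hypothesis identifies $M\otimes_{A_\inf}W(k)$ as a finite free $W(k)$-module.

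The final step is then a purely algebraic assertion about the two-dimensional regular-type ring $A_\inf$: a finitely presented $A_\inf$-module whose specialization to $W(k)$ is finite free must have finite free specialization to $W(C^\flat)$. I expect to prove this via a classification of finitely presented $A_\inf$-modules as, up to finite free summands, direct sums of cyclic torsion modules of the form $A_\inf/(p^a,\mu^b)$ (supported at the closed point), together with cyclic pieces supported at the two height-one primes $(p)$ and $(\xi)$. The freeness of $M\otimes_{A_\inf}W(k)$ should rule out each type of obstructing torsion, and tracking this through inversion of $\mu$ and $p$-adic completion then yields torsion-freeness of $H^i_\sub{\acute et}(X,\bb Z_p)$.
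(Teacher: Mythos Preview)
Your framework matches the paper's exactly: construct an $A_\sub{inf}$-valued cohomology $R\Gamma_{A_\sub{inf}}(\frak X)$ together with a crystalline comparison over $W(k)$ and an \'etale comparison after inverting $\mu$, and then finish by an algebraic argument about $H^i_{A_\sub{inf}}(\frak X)$. The gap is in this last algebraic step.

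You omit the Frobenius structure. The paper records a third property ($\bb A$3): the complex carries a $\phi$-semilinear endomorphism $\phi_{\bb A}$ compatible with both comparisons. It is precisely the combination of ($\bb A$2) and ($\bb A$3) --- crystalline comparison \emph{and} Frobenius --- that a Fitting ideal argument turns into the statement that $H^i_{\bb A}(\frak X)$ is a finite free $\bb A_\sub{inf}$-module (indeed a Breuil--Kisin--Fargues module). Freeness of both specialisations is then immediate.

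Your proposed replacement, a purely module-theoretic implication of the form ``if $M$ is finitely presented over $A_\sub{inf}$ and $M\otimes_{A_\sub{inf}}W(k)$ is free, then $M\otimes_{A_\sub{inf}}W(C^\flat)$ is free'', is false. Take $M=A_\sub{inf}/(p\mu)$. Since $\mu=[\ep]-1$ maps to $0$ in $W(k)$, one has $M\otimes_{A_\sub{inf}}W(k)=W(k)$, which is free. But $\mu$ becomes a unit in $W(C^\flat)$ (its reduction mod $p$ is $\ep-1\neq 0$ in the field $C^\flat$), so $M\otimes_{A_\sub{inf}}W(C^\flat)=W(C^\flat)/p$, which is not free. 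The hoped-for classification of finitely presented $A_\sub{inf}$-modules into cyclic pieces $A_\sub{inf}/(p^a,\mu^b)$ plus height-one torsion does not hold either: $A_\sub{inf}$ is not Noetherian, is not a two-dimensional regular local ring, and modules such as $A_\sub{inf}/(p\mu)$ already escape your list. What is true (and used implicitly in the paper) is that the Fitting ideals of the cohomology groups are constrained by $\phi$-semilinearity, and it is exactly this constraint that excludes counterexamples like $A_\sub{inf}/(p\mu)$; without Frobenius there is no mechanism to rule them out.

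A secondary issue: your universal coefficients step on the crystalline side is more delicate than you indicate. The kernel of $A_\sub{inf}\to W(k)$ is $W(\frak m^\flat)$, which is not principal, so it is not automatic that only $\Tor_0$ and $\Tor_1$ contribute; one needs some control on the Tor-amplitude of the cohomology groups of a perfect $A_\sub{inf}$-complex before the short exact sequence you invoke is available.
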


There are previous results of a similar flavour, at least in the case where $\frak X$ is the $p$-adic completion of the base-change to $\roi$ of a proper smooth scheme  over a finite extension $V$ of $W(k)$, of ramification index $e$. Specifically, under stronger assumptions on torsion-freeness (for Hodge cohomology, and assuming degeneration of the mod $p$ Hodge-de Rham spectral sequence), G.~Faltings proves the result if $\dim X<p-1$, cf. \cite[Theorem 6]{Faltings1999}. On the other hand, X.~Caruso \cite{Caruso2008} proves that if $ie<p-1$, then the torsion in crystalline and in \'etale cohomology agree, which implies our theorem in that case. In the last section, we give an example showing that there may be torsion in crystalline cohomology while the \'etale cohomology is torsion-free; thus one cannot hope to extend Caruso's result to the general case.

As an example of our theorem which falls outside the scope of applicability of previous results, one may take for $\frak X$ an Enriques surface over a $2$-adic base. In that case, there is $2$-torsion in $H^2_\sub{\acute et}(X,\bb Z_2)$, so that our theorem implies the existence of $2$-torsion in $H^2_\sub{crys}(\frak X_k/W(k))$, and thus non-vanishing of $H^1_\sub{dR}(\frak X_k/k)$, which is a known pathology of Enriques surfaces in characteristic $2$, cf. \cite[Proposition 7.3.8]{Illusie1979}.

We note that the theorem may have applications to the study of torsion in the \'etale cohomology of Shimura varieties. For example, K.-W.~Lan and J.~Suh \cite{LanSuh2012} prove first a vanishing result for torsion in crystalline cohomology, and then deduce the same for \'etale cohomology using existing results in integral $p$-adic Hodge theory.

\section{Strategy of the proof}

Let $\smash{\roi^\flat:=\projlim_\phi\roi/p}$ be the tilt (in the terminology of \cite[\S3]{Scholze2012}; classically $\roi^\flat$ was denoted by $\cal R_\cal O$ \cite[1.2.2]{Fontaine1994}) of $\roi$, whose field of fractions $C^\flat$ is a perfectoid field of characteristic $p$, and let $\bb A_\sub{inf}:=W(\roi^\flat)$ be the first of Fontaine's period rings, on which we denote by $\phi$ the usual Witt vector Frobenius. We will require the elements $\ep:=(1,\zeta_p,\zeta_p,\dots)\in\roi^\flat$ and $\mu:=[\ep]-1\in \bb A_\sub{inf}$, where $\zeta_p,\zeta_{p^2},\dots\in\roi$ is a chosen compatible sequence of $p$-power roots of unity and $[\cdot]$ denotes the Teichm\"uller lifting. The results of our constructions will be independent of this chosen sequence of $p$-power roots of unity.

The theorem is proved by constructing a Zariski sheaf $\bb A\Omega_{\frak X/\roi}$ of complexes of $\bb A_\sub{inf}$-modules on any smooth, formal $\roi$-scheme $\frak X$, whose cohomology, when $\frak X$ is proper, interpolates the $p$-adic \'etale and crystalline cohomologies of the generic and special fibres of $\frak X$ respectively. More precisely, it satisfies the following local properties in general:
\begin{itemize}
\item[($\bb A$1)] $\bb A\Omega_{\frak X/\roi}\dotimes_{\bb A_\sub{inf}}\bb A_\sub{inf}[\tfrac1\mu]\simeq R\nu_*\bb A_{\sub{inf},X}\dotimes_{\bb A_\sub{inf}}\bb A_\sub{inf}[\tfrac1\mu]$, where $\nu$ is the projection from the pro-\'etale site of $X$ to the Zariski site of $\frak X$ (see \cite[\S3]{Scholze2013} for the necessary theory of the pro-\'etale site of a rigid analytic space);
\item[($\bb A$2)] $\bb A\Omega_{\frak X/\roi}\dotimes_{\bb A_\sub{inf}}W(k)\simeq W\Omega_{\frak X_k/k}^\blob$ after $p$-adic completion of the left-hand side;
\item[($\bb A$3)] $\bb A\Omega_{\frak X/\roi}$ is equipped with a $\phi$-semilinear endomorphism $\phi_{\bb A}$ which is compatible with the previous two isomorphisms.
\end{itemize}

Now assume that the hypotheses of the theorem are satisfied, i.e., that $\frak X$ is moreover proper and that the $i^\sub{th}$ crystalline cohomology of its special fibre is torsion-free. Then a Fitting ideal argument using ($\bb A$2) and ($\bb A$3) shows that the hypercohomology group $H^i_\bb A(\frak X):=H^i(\frak X,\bb A\Omega_{\frak X/\roi})$ is a finite free $\bb A_\sub{inf}$-module equipped with a $\phi$-semilinear endomorphism $\phi_\bb A$ which becomes an isomorphism after inverting $\phi(\xi)$, where $\xi:=\mu/\phi^{-1}(\mu)$, in the sense that $\phi_{\bb A}:\phi^*H^i_\bb A(\frak X)\otimes_{\bb A_\sub{inf}}\bb A_\sub{inf}[\tfrac1{\phi(\xi)}]\isoto H^i_\bb A(\frak X)\otimes_{\bb A_\sub{inf}}\bb A_\sub{inf}[\tfrac1{\phi(\xi)}]$; in other words, $H^i_\bb A(\frak X)$ is a Breuil--Kisin module in the sense of L.~Fargues \cite[\S4]{Fargues2015}.

Moreover, the Breuil--Kisin module $H^i_\bb A(\frak X)$ has the following \'etale and crystalline specialisations, which follow respectively from ($\bb A$1) and ($\bb A$2):
\begin{itemize}
\item $(H^i_\bb A(\frak X)\otimes_{\bb A_\sub{inf}} W(C^\flat))^{\phi_{\bb A}=1}\cong H^i_\sub{\acute et}(X,\bb Z_p)$;
\item there is a canonical inclusion $H^i_\bb A(\frak X)\otimes_{\bb A_\sub{inf}} W(k)\into H^i_\sub{crys}(\frak X_k/W(k))$ with torsion cokernel, which is an isomorphism if $H^{i+1}_\sub{crys}(\frak X_k/W(k))$ is also assumed to be torsion-free.
\end{itemize}
Hence $H^i_\sub{\acute et}(X,\bb Z_p)$ is a free $\bb Z_p$-module of the same rank as $H^i_\sub{crys}(\frak X_k/W(k))$. Assuming the existence of the complex $\bb A\Omega_{\frak X/\roi}$, this completes the sketched proof of the theorem and defines our new cohomology theory $H^i_\bb A(\frak X)$ interpolating the $p$-adic \'etale and crystalline cohomologies of the generic and special fibres of $\frak X$ respectively.

\begin{remark}\label{remark_algebrisable_case}
	When $\frak X$ is the $p$-adic completion of a proper smooth $\mathcal{O}$-scheme, the theorem follows more directly from the existence of $\bb A\Omega_{\frak X/\roi}$, at least if $H^{i+1}_\sub{crys}(\mathfrak{X}_k/W(k))$ is also torsion-free. Indeed, $R\Gamma_{\mathcal{O}^\flat}(\frak X) := R\Gamma(\frak X, \bb A\Omega_{\frak X/\roi}) \dotimes_{\mathbb{Z}} \bb F_p$ is a perfect complex of $\mathcal{O}^\flat$-modules, and perhaps the simplest new observable of our construction, whose fibres interpolate the \'etale and crystalline cohomology modulo $p$, thanks to ($\bb A$1) and ($\bb A$2):
	\begin{itemize}
		\item 	$R\Gamma_{\mathcal{O}^\flat}(\frak X) \dotimes_{\mathcal{O}^\flat} C^\flat \simeq  R\Gamma_\sub{\acute et}(X,\bb F_p) \dotimes_{\bb F_p} C^\flat$,
		\item $R\Gamma_{\mathcal{O}^\flat}(\frak X) \dotimes_{\mathcal{O}^\flat} k \simeq  R\Gamma(\frak X_k,\Omega^\blob_{\frak X_k/k})$.
	\end{itemize}

	By semicontinuity this gives $\dim_{\bb F_p}H^i_\sub{\acute et}(X,\bb F_p) \leq \dim_kH^i(\frak X_k, \Omega^\blob_{\frak X_k/k})$, which yields the third of the following (in)equalities:
	\[\hspace{-5mm} \dim_{W(k)[\tfrac1p]} H^i_\sub{crys}(\frak X_k/W(k))[\tfrac{1}{p}] = \dim_{\bb Q_p}H^i_\sub{\acute et}(X,\mathbb{Q}_p) \leq \dim_{\bb F_p}H^i_\sub{\acute et}(X,\bb F_p) \leq \dim_kH^i(\frak X_k, \Omega^\blob_{\frak X_k/k}). \]
	Here the first equality is a consequence of the crystalline comparison theorem and thus uses that $\frak X$ is algebraic, while the second inequality is formal. If we now assume further that $H^\ast_\sub{crys}(\frak X_k/W(k))$ for $\ast=i,i+1$ is torsion free, then the first and last dimensions in the above chain of (in)equalities are equal, and thus all the dimensions are equal. In particular, 
	\[\dim_{\bb Q_p}H^i_\sub{\acute et}(X,\mathbb{Q}_p) = \dim_{\bb F_p}H^i_\sub{\acute et}(X,\bb F_p),\]
	which implies that $H^i_\sub{\acute et}(X,\mathbb{Z}_p)$ is torsion free. (Note also that, by GAGA results \cite[Thm.~3.7.2]{Huber1996}, $H^i_\sub{\acute et}(X,\bb Z_p)$ identifies with the $p$-adic \'etale cohomology of the generic fibre of the proper smooth $\roi$-scheme of which $\frak X$ is the $p$-adic completion.)
\end{remark}

\begin{remark}
	One can lift $(\bb A$2) above to a slightly larger portion of $\bb A_{\inf}$: the base change $\bb A \Omega_{\frak X/\roi} \otimes^{\bb L}_{\bb A_{\inf}} \bb A_{\mathrm{crys}}$ can be identified with the (Zariski sheafified) crystalline cohomology of $\frak X$ relative to $\bb A_{\mathrm{crys}}$, where the latter is given the standard pd-structure.
\end{remark}

\section{Outline of construction of $\bb A\Omega_{\frak X/\roi}$}
It is sufficient to naturally (in an $\infty$-categorical sense) define the value of $\bb A\Omega_{\frak X/\roi}$ on arbitrary affine open formal subschemes of $\frak X$, so we may assume that $\frak X=\op{Spf}R$, where $R$ is the $p$-adic completion of a smooth $\roi$-algebra. Hence $X=\frak X_C$ is the rigid analytic space $\Spa(R[\tfrac1p],R)$ over $C$, on whose pro-\'etale site we denote by $\bb A_{\sub{inf},X}:=W(\projlim_\phi\roi_X^+/p)$ the first period sheaf from \cite[Def.~6.1]{Scholze2013}; note that $\bb A_{\sub{inf},X}$ is a sheaf of $\bb A_\sub{inf}$-modules.

\subsection{Modifying torsion via the d\'ecalage functor $L\eta$}\label{subsection_modifying_torsion}
We must first describe a general process for modifying torsion in complexes, based on P.~Deligne's \cite[1.3.3]{Deligne1971} d\'ecal\'e filtration and used previously by P.~Berthelot and A.~Ogus \cite[Def.~8.6]{BerthelotOgus1978}. Let $A$ be a ring, $f\in A$ a non-zero divisor, and $N$ a cochain complex of $A$-modules supported in non-negative degrees such that $H^0(N)$ is $f$-torsion-free. Then we may replace $N$ by a quasi-isomorphic complex -- still supported in non-negative degrees -- consisting of $f$-torsion-free $A$-modules, and we then define $L\eta_fN$ to be the subcomplex of $N$ given by $L\eta_fN^n:=\{x\in f^nN^n:dx\in f^{n+1}N^{n+1}\}$ for $n\ge 0$. Then $L\eta_f: D^{\ge 0}_{f-\sub{tf}}(A)\to D^{\ge 0}_{f-\sub{tf}}(A)$ is a well-defined functor, where $D^{\ge 0}_{f-\sub{tf}}(A)$ is the derived (or, more usefully, $\infty$-) category of complexes of $A$-modules supported in non-negative degrees with $f$-torsion-free $H^0$. Evidently there is a natural transformation $L\eta_f\to\op{id}$.

Among various easily established identities (e.g., if $g\in A$ is another non-zero divisor then $L\eta_{fg}=L\eta_fL\eta_g$ and $(L\eta_f-)\dotimes_AA/gA\simeq L\eta_{f\,\sub{mod}\,gA}(-\dotimes_AA/gA)$, under mild hypotheses), the most important is that there is a natural quasi-isomorphism \[L \eta_fN\dotimes_AA/fA\quis (H^\blob(N\dotimes_AA/fA),\text{ Bock}),\] where the right side denotes the complex formed from the cohomology of $N\dotimes_AA/fA$ with differential given by the Bockstein operator $H^n(N\dotimes_AA/fA)\to H^{n+1}(N\dotimes_AA/fA)$ associated to $f$.

\subsection{Constructing pro Witt complexes}
We now describe two general processes for building ``$F$-$V$-procomplexes over the $\roi$-algebra $R$'' in the sense of Langer--Zink \cite{LangerZink2004}, among which their relative de Rham--Witt complex $\{W_r\Omega_{R/\roi}^\blob\}_r$ is the initial object. Note first that Fontaine's usual homomorphism of $p$-adic Hodge theory $\theta:\bb A_\sub{inf}\to\roi$, whose kernel is generated by the non-zero divisor $\xi$, lifts to a homomorphism $\theta_r:\bb A_\sub{inf}\to W_r(\roi)$ whose kernel is generated by $\xi_r:=\mu/\phi^{-r}(\mu)$; the Restriction $R$, Frobenius $F$, and Verschiebung $V$ on the Witt vectors $W_r(\roi)$ correspond via the homomorphisms $\theta_r$ respectively to $\op{id}$, $\phi$, and $\xi\phi^{-1}$ on $\bb A_\sub{inf}$.

Let $D$ be a coconnective, commutative, differential graded (or, more generally, $\bb E_\infty$-) algebra over $\bb A_\sub{inf}$ which is equipped with a $\phi$-semilinear automorphism $\phi_D$ and such that $H^0(D)$ is $\mu$-torsion-free. Assume also that there are isomorphisms of $W_r(\roi)$-algebras $\lambda_r:W_r(R)\isoto H^0(D\dotimes_{\bb A_\sub{inf}}\bb A_\sub{inf}/\xi_r)$ for all $r\ge 1$ in such a way that the Restriction, Frobenius, and Verschiebung on the left correspond respectively to the canonical projection, $\phi$, and $\xi\phi^{-1}$ on the right.

\ul{First process:} Equipping the cohomology groups $\cal W_r^n(D)_\sub{pre}:=H^n(D\dotimes_{\bb A_\sub{inf}}\bb A_\sub{inf}/\xi_r)$ with the Bockstein differential $\cal W_r^n(D)_\sub{pre}\to \cal W_r^{n+1}(D)_\sub{pre}$ associated to $\xi_r$ makes $\cal W_r^\blob(D)_\sub{pre}$ into a commutative differential graded $W_r(\roi)$-algebra. Equipping them further with a Restriction, Frobenius, and Verschiebung given by ``$\phi^{-r}(\xi)^n$ times the canonical projection'', $\phi$, and $\xi\phi^{-1}$ makes $\{\cal W_r^\blob(D)_\sub{pre}\}_r$ into an $F$-$V$-procomplex over the $\roi$-algebra $R$. (To be precise, the necessary identity that $Fd\lambda_r([x])=\lambda_r([x^{p-1}])d\lambda_r([x])$ for all $x\in R$ does not appear to be automatic for arbitrary $D$, but will be satisfied in our cases of interest.) By the universal property of Langer--Zink's relative de Rham--Witt complex, there are then induced natural morphisms $\{W_r\Omega_{R/\roi}^\blob\}_r\to \{\cal W_r^\blob(D)_\sub{pre}\}_r$ of $F$-$V$-procomplexes over the $\roi$-algebra $R$.

\ul{Improved process:} It turns out that $\cal W_r^\blob(D)_\sub{pre}$ must be adjusted by some $\phi^{-r}(\mu)$-torsion. To do this, we modify the first process in order to equip the cohomology groups $\cal W_r^\blob(D):=H^\blob(L\eta_{\phi^{-r}(\mu)}D\dotimes_{\bb A_\sub{inf}}\bb A_\sub{inf}/\xi_r)$ with the structure of an $F$-$V$-procomplex over the $\roi$-algebra $R$. There are  natural resulting morphisms $\{W_r\Omega_{R/\roi}^\blob\}_r\to \{\cal W_r^\blob(D)\}_r\to \{\cal W_r^\blob(D)_\sub{pre}\}_r$ of $F$-$V$-procomplexes over the $\roi$-algebra $R$.

The following is the key technical step in the construction of $\bb A\Omega_{R/\roi}$:

\begin{theorem}\label{theorem_key_almost_iso}
Apply the ``Improved process'' to $D=R\Gamma_\sub{pro\acute et}(X,\bb A_{\sub{inf},X})$. Then the induced map $W_r\Omega_{R/\roi}^\blob\to \cal W_r^\blob(R\Gamma_\sub{pro\acute et}(X,\bb A_{\sub{inf},X}))$ descends to the $p$-adic completion of $W_r\Omega_{R/\roi}^\blob$, inducing an almost (wrt.~the ideal $W_r(\frak m)\subseteq W_r(\roi)$) quasi-isomorphism of complexes of $W_r(\roi)$-modules for each $r\ge1$: \[\hat{W_r\Omega_{R/\roi}^\blob}\stackrel{\sub{al.}}{\quis} \cal W_r^\blob(R\Gamma_\sub{pro\acute et}(X,\bb A_{\sub{inf},X}))\]
\end{theorem}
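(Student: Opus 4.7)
The plan is to work locally on $\frak X=\op{Spf}R$ with a chosen framing $\roi\langle T_1^{\pm1},\dots,T_d^{\pm1}\rangle\to R$, model $D=R\Gamma_\sub{pro\acute et}(X,\bb A_{\sub{inf},X})$ by continuous group cohomology on a perfectoid Galois cover, apply the $L\eta$ functor to the resulting explicit Koszul complex, and identify the outcome with the Langer--Zink de Rham--Witt complex. The universal-property machinery of the improved process supplies the comparison map, so the actual task is to verify that after $p$-adic completion it is an almost isomorphism.

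First I would introduce the perfectoid tower $R_\infty:=\widehat{R\otimes_{\roi\langle T^{\pm}\rangle}\roi\langle T_1^{\pm1/p^\infty},\dots,T_d^{\pm1/p^\infty}\rangle}$, a pro-\'etale cover of $X$ with Galois group $\Gamma\cong\bb Z_p^d$. Faltings's almost purity applied to $\bb A_{\sub{inf},X}$ yields an almost quasi-isomorphism $D\stackrel{\sub{al.}}{\quis} R\Gamma_\sub{cont}(\Gamma,\bb A_\sub{inf}(R_\infty))$, and the right-hand side is modelled by the Koszul complex $K^\blob:=K_{\bb A_\sub{inf}(R_\infty)}(\gamma_1-1,\dots,\gamma_d-1)$ for topological generators $\gamma_i$ of $\Gamma$. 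The essential arithmetic input is that $\gamma_i$ acts on the Teichm\"uller lift of any $p$-power root of $T_i$ by multiplication by the corresponding Teichm\"uller lift of a root of unity; consequently, on each $\Gamma$-character isotypic component of $\bb A_\sub{inf}(R_\infty)$ the operator $\gamma_i-1$ equals, up to a Teichm\"uller unit, an explicit multiple of $\phi^{-k}(\mu)$ for some $k\ge 0$. This divisibility makes $L\eta_{\phi^{-r}(\mu)}K^\blob$ and its reduction modulo $\xi_r$ computable by hand.

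Next I would carry out the improved process for $D$ modelled by $K^\blob$, obtaining $\cal W_r^\blob(D)=H^\blob(L\eta_{\phi^{-r}(\mu)}K^\blob\dotimes_{\bb A_\sub{inf}}\bb A_\sub{inf}/\xi_r)$ with its Bockstein differential, and compare it to $\hat{W_r\Omega_{R/\roi}^\blob}$ via the Langer--Zink universal property. I would first verify the claim in the torus case $R=\roi\langle T_1^{\pm1},\dots,T_d^{\pm1}\rangle$, where the isotypic decomposition of $L\eta_{\phi^{-r}(\mu)}K^\blob$ modulo $\xi_r$ can be matched term-by-term --- compatibly with $F$, $V$, and Restriction --- against the explicit $W_r(\roi)$-basis of basic Witt differentials that computes $W_r\Omega^\blob$ of a Laurent polynomial ring. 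The general smooth case then follows by \'etale base change along the framing: both the $p$-adically completed relative de Rham--Witt complex and the formation of $L\eta_{\phi^{-r}(\mu)}R\Gamma_\sub{pro\acute et}(X,\bb A_{\sub{inf},X})$ are compatible with \'etale localisation of $R$ over $\roi\langle T^{\pm}\rangle$, up to almost isomorphism.

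The main technical obstacle is the explicit torus computation. The naive Bockstein cohomology of $K^\blob\dotimes_{\bb A_\sub{inf}}\bb A_\sub{inf}/\xi_r$ --- i.e., the ``first process'' applied directly --- is \emph{not} $W_r\Omega^\blob_{R/\roi}$: it carries spurious $\phi^{-r}(\mu)$-torsion coming from the higher-weight isotypic pieces, and the role of $L\eta_{\phi^{-r}(\mu)}$ is precisely to kill this torsion. Checking that what remains matches Langer--Zink's complex in every degree, simultaneously and compatibly with the $F$-$V$-procomplex structure, is a delicate book-keeping exercise on the weight decomposition, with the needed identity $Fd\lambda_r([x])=\lambda_r([x^{p-1}])d\lambda_r([x])$ falling out of the Teichm\"uller divisibility above. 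Once this is in place, the ``almost'' qualifier in the statement is inherited directly from the almost purity used at the outset to pass from $D$ to the Koszul complex.
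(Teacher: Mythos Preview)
Your proposal is correct and follows essentially the same route as the paper: reduce by \'etale localisation to the torus $R=\roi\langle T_1^{\pm1},\dots,T_d^{\pm1}\rangle$, replace $R\Gamma_\sub{pro\acute et}(X,\bb A_{\sub{inf},X})$ by the continuous $\bb Z_p^d$-group cohomology of the perfectoid cover via Faltings' almost purity, and then carry out the explicit matching of the improved-process output against Langer--Zink's basic Witt differentials. One small point of care: almost purity gives the comparison between $D$ and the Koszul model only \emph{after} tensoring with $\bb A_\sub{inf}/\xi_r$ (almost with respect to $W_r(\frak m)$), not integrally over $\bb A_\sub{inf}$ --- but since $\cal W_r^\blob(-)$ is built from $-\dotimes_{\bb A_\sub{inf}}\bb A_\sub{inf}/\xi_r$ this is exactly what you need.
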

\begin{proof}
By a localisation and \'etale base change argument, the assertions reduce to the case that $R=\roi\langle\ul T^{\pm1}\rangle :=\roi\langle T_1^{\pm1},\dots,T_d^{\pm1}\rangle$. Let $\tilde R:=\roi\langle\ul T^{\pm1/p^\infty}\rangle$, which is  equipped with a continuous action of $\bb Z_p^d$ via $\roi$-algebra homomorphisms, where the generator in the $i^\sub{th}$-coordinate of $\bb Z_p^d$ acts via $T_i^{j/p^k}\mapsto\zeta_{p^k}^jT_i^{j/p^k}$, for $j\in\bb Z$ and $k\ge 0$. In fact, $\tilde R[\tfrac1p]$ is a perfectoid $C$-algebra occurring as the global sections of a pro-\'etale cover $\tilde X:=\projlimf_k \Spa(K\langle \ul T^{\pm1/p^k}\rangle, \roi\langle \ul T^{\pm1/p^k}\rangle)$ of $X$, and there is a corresponding Hochschild--Serre map $R\Gamma_\sub{cont}(\bb Z_p^d,H^0_\sub{pro\acute et}(\tilde X,\bb A_{\sub{inf},X}))\to R\Gamma_\sub{pro\acute et}(X,\bb A_{\sub{inf},X})$. After tensoring by $\bb A_\sub{inf}/\xi_r\cong W_r(\roi)$, this is an almost quasi-isomorphism (wrt.~the ideal $W_r(\frak m)\subseteq W_r(\roi)$) by Faltings' almost purity theorem \cite[Thm.~3.1]{Faltings1988} \cite[Thm.~6.5]{Scholze2013}.

Applying the ``Improved process'' also to $D=R\Gamma_\sub{cont}(\bb Z_p^d,H^0_\sub{pro\acute et}(\tilde X,\bb A_{\sub{inf},X}))$ we arrive at a commutative diagram of commutative differential graded $W_r(\roi)$-algebras
\[\xymatrix{
W_r\Omega_{R/\roi}^\blob\ar[rr]\ar[dr] && \cal W_r^\blob(R\Gamma_\sub{pro\acute et}(X,\bb A_{\sub{inf},X}))\\
&\cal W_r^\blob(R\Gamma_\sub{cont}(\bb Z_p^d,H^0_\sub{pro\acute et}(\tilde X,\bb A_{\sub{inf},X})))\ar[ur]&
}\]
where the right diagonal arrow is an almost quasi-isomorphism by the final assertion of the previous paragraph. Up to explicit $\phi^{-r}(\mu)$-torsion, the terms appearing in the complex at the bottom of the diagram are $H^\blob_\sub{cont}(\bb Z_p^d,W_r(\tilde R))$; the top left of the diagram is even more explicit, using Langer--Zink's description of their relative de Rham--Witt complex in the case of a polynomial algebra \cite[\S2]{LangerZink2004}. Using such explicit descriptions we verify directly that the diagonal left arrow becomes an {\em isomorphism} after $p$-adically completing $W_r\Omega_{R/\roi}^\blob$, and this completes the proof.
\end{proof}

\subsection{The final step: glueing}
From the definition of the improved $\cal W_r^\blob(-)$-process and the stated properties of the $L\eta$ functor, there is a chain of quasi-isomorphisms
\begin{align*}\cal W_r^\blob(R\Gamma_\sub{pro\acute et}(X,\bb A_{\sub{inf},X}))
&=\left(H^\blob(L\eta_{\phi^{-r}(\mu)}R\Gamma_\sub{pro\acute et}(X,\bb A_{\sub{inf},X})\dotimes_{\bb A_\sub{inf}}\bb A_\sub{inf}/\xi_r),\text{ Bock}\right)\\
&\simeq L\eta_{\xi_r}L\eta_{\phi^{-r}(\mu)}R\Gamma_\sub{pro\acute et}(X,\bb A_{\sub{inf},X})\dotimes_{\bb A_\sub{inf}}\bb A_\sub{inf}/\xi_r\\
&\simeq L\eta_\mu R\Gamma_\sub{pro\acute et}(X,\bb A_{\sub{inf},X})\dotimes_{\bb A_\sub{inf}}\bb A_\sub{inf}/\xi_r,
\end{align*}
and hence the previous theorem may be rewritten as an almost quasi-isomorphism $\hat{W_r\Omega_{R/\roi}^\blob}\simeq L\eta_\mu R\Gamma(X,\bb A_{\sub{inf},X})\dotimes_{\bb A_\sub{inf}}\bb A_\sub{inf}/\xi_r$.

Let $R_k = R\otimes_\roi k$. For each $r\ge 1$, we can reduce the preceding map from $\bb A_\sub{inf} / \xi_r\cong W_r(\roi)$ to $W_r(k)$ to obtain the lower horizontal map in the diagram
\[\xymatrix{
\bb A_r\Omega_{R/\roi}\ar[r]\ar[d] & L\eta_\mu R\Gamma_\sub{pro\acute et}(X,\bb A_{\sub{inf},X})\dotimes_{\bb A_\sub{inf}} \bb A_\sub{inf}/p^r \ar[d]\\
W_r\Omega_{R_k/k}^\blob\ar[r] & L\eta_\mu R\Gamma_\sub{pro\acute et}(X,\bb A_{\sub{inf},X})\dotimes_{\bb A_\sub{inf}} W_r(k),
}\]
defining the top left complex $\bb A_r\Omega_{R/\roi}$ of $\bb A_\sub{inf}/p^r$-modules via (homotopy) pullback. As the lower two terms are almost zero, the upper horizontal arrow is an almost isomorphism. Moreover, the right vertical map is a quasi-isomorphism after applying $-\dotimes_{\bb A_\sub{inf}/p^r} W_r(k)$, as $W_r(k)\dotimes_{\bb A_\sub{inf}/p^r} W_r(k)\simeq W_r(k)$. This implies that there is a quasi-isomorphism
\[
\bb A_r\Omega_{R/\roi}\dotimes_{\bb A_\sub{inf}/p^r} W_r(k)\simeq W_r\Omega_{R_k/k}^\blob.
\]
We may now finally define \[\bb A\Omega_{R/\roi}:=\holim_r \bb A_r\Omega_{R/\roi},\] which is quasi-isomorphic, up to $W(\frak m^\flat)$-torsion (we avoid the word ``almost'' here since $W(\frak m^\flat)\neq W(\frak m^\flat)^2$), to $L\eta_\mu R\Gamma(X,\bb A_{\sub{inf},X})$ and satisfies $\bb A\Omega_{R/\roi}\dotimes_{\bb A_\sub{inf}} W(k)\simeq W\Omega_{R_k/k}^\blob$ after $p$-adic completion of the left-hand side. The complex $\bb A\Omega_{R/\roi}$ is equipped with a $\phi$-semilinear operator $\phi_\bb A$ obtained by glueing those on $W\Omega_{R_k/k}^\blob$ and $L\eta_\mu R\Gamma_\sub{pro\acute et}(X,\bb A_{\sub{inf},X})$.

Since $\mu\in W(\frak m^\flat)$, we have
\[\bb A\Omega_{R/\roi}\dotimes_{\bb A_\sub{inf}}\bb A_\sub{inf}[\tfrac1\mu]
\simeq L\eta_\mu R\Gamma_\sub{pro\acute et}(X,\bb A_{\sub{inf},X})\dotimes_{\bb A_\sub{inf}}\bb A_\sub{inf}[\tfrac1\mu]
\simeq R\Gamma_\sub{pro\acute et}(X,\bb A_{\sub{inf},X})\dotimes_{\bb A_\sub{inf}}\bb A_\sub{inf}[\tfrac1\mu],\]
verifying ($\bb A$1). We have already verified ($\bb A$2). This completes the outline of the construction of $\bb A\Omega_{\frak X/\roi}$.

\subsection{q-de Rham complexes}
An alternative perspective on $\bb A\Omega_{\frak X/\roi}$ and its construction is offered by the idea of $q$-de Rham complexes.

If $A$ is a ring, $q\in A^\times$, and $U$ is a formal variable, then the ``$q$-de Rham complex'' $q\op-\Omega_{A[U^{\pm1}]/A}^\blob$ of the Laurent polynomial algebra $A[U^{\pm1}]$ is defined to be \[A[U^{\pm1}]\To A[U^{\pm1}]\dlog U,\quad U^j\mapsto [j]_qU^j\dlog U,\] where $\dlog U$ is a formal symbol and $[j]_q:=\tfrac{q^j-1}{q-1}$ is the ``$q$-analogue of the integer $j$''. In the more general case of a Laurent polynomial algebra in several variables $U_1,\dots,U_d$, set \[q\op-\Omega_{A[U_1^{\pm1},\dots,U_d^{\pm1}]/A}^\blob:=\bigotimes_{i=1}^mq\op-\Omega_{A[U_i^{\pm1}]/A}^\blob.\] Note that if $q=1$ then the $q$-de Rham complex equals the usual de Rham complex; more generally, it is a deformation of the de Rham complex over $\res A:=A/(q-1)$, in the sense that $q\op-\Omega^\blob_{A[U_1^{\pm 1},\dots,U_d^{\pm1}]/A}\otimes_A\res A= \Omega^\blob_{\res A[U_1^{\pm 1},\dots,U_d^{\pm1}]/\res A}$.

If $A$ is a topological ring, whose topology is $I$-adic for some ideal $I$, then define $q\op-\Omega^\blob_{A\langle U_1^{\pm 1},\dots,U_d^{\pm1}\rangle/A}$ to be the $I$-adic completion of $q\op-\Omega^\blob_{A[U_1^{\pm 1},\dots,U_d^{\pm1}]/A}$.

\begin{proposition}
If $R=\roi\langle T_1^{\pm1},\dots,T_d^{\pm1}\rangle$ then $\bb A\Omega_{R/\roi}$ is quasi-isomorphic to the $q$-de Rham complex $q\op-\Omega^\blob_{\bb A_\sub{inf}\langle U_1^{\pm 1},\dots,U_d^{\pm1}\rangle/\bb A_\sub{inf}}$ associated to $q=[\ep]\in\bb A_\sub{inf}^\times$.
\end{proposition}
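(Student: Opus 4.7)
The plan is to unpack the construction of $\bb A\Omega_{R/\roi}$ in the torus case and identify it with the $q$-de Rham complex via an explicit Koszul computation on the pro-\'etale Galois cover $\tilde X\to X$ used in the proof of Theorem~\ref{theorem_key_almost_iso}.

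First, using the main construction, it suffices (up to $W(\frak m^\flat)$-torsion, which will be absent from the final answer) to compute $L\eta_\mu R\Gamma_\sub{pro\acute et}(X,\bb A_{\sub{inf},X})$. The Galois cover $\tilde X\to X$ with group $G=\bb Z_p^d$ gives an almost quasi-isomorphism $R\Gamma_\sub{cont}(G,\bb A_\sub{inf}(\tilde R))\to R\Gamma_\sub{pro\acute et}(X,\bb A_{\sub{inf},X})$, where $\bb A_\sub{inf}(\tilde R):=W(\tilde R^\flat)$. Since $L\eta_\mu$ promotes almost isomorphisms (with respect to $W(\frak m^\flat)$) to honest quasi-isomorphisms, the task reduces to identifying $L\eta_\mu K^\blob$ with the claimed $q$-de Rham complex, where $K^\blob:=\op{Kos}_{\bb A_\sub{inf}(\tilde R)}(\gamma_1-1,\ldots,\gamma_d-1)$ computes the continuous group cohomology.

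Next I would exploit the weight decomposition. The Teichm\"uller monomials $[T^\flat]^\alpha:=[T_1^\flat]^{\alpha_1}\cdots[T_d^\flat]^{\alpha_d}$ for $\alpha\in\bb Z[\tfrac1p]^d$ form a topological $\bb A_\sub{inf}$-basis of $\bb A_\sub{inf}(\tilde R)$, and $\gamma_i$ acts diagonally by multiplication by $[\ep^{\alpha_i}]$. Hence $K^\blob$ splits, after appropriate completion, into weight components $K^\blob_\alpha$. For $\alpha\in\bb Z^d$, the identity $[\ep]^{\alpha_i}-1=\mu\cdot[\alpha_i]_q$ with $q:=[\ep]$ shows that all differentials in $K^\blob_\alpha$ are divisible by $\mu$, so applying $L\eta_\mu$ divides each differential by $\mu$ and produces the Koszul complex on $([\alpha_1]_q,\ldots,[\alpha_d]_q)$. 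Reassembling the integer weights under the correspondence $U_i\leftrightarrow[T_i^\flat]$ then yields precisely the $q$-de Rham complex $q\op-\Omega^\blob_{\bb A_\sub{inf}\langle U_1^{\pm 1},\ldots,U_d^{\pm1}\rangle/\bb A_\sub{inf}}$.

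The hard part will be showing $L\eta_\mu K^\blob_\alpha\simeq 0$ for $\alpha\notin\bb Z^d$, so that the non-integer weights genuinely drop out. Writing $\alpha_i=b_i/p^{k_i}$ with $k_i\ge 1$ and $(b_i,p)=1$ for some $i$, one finds $[\ep^{\alpha_i}]-1=\phi^{-k_i}(\mu)\cdot u$ for a unit $u$: a strict divisor of $\mu$. A direct unwinding then identifies $L\eta_\mu$ of the corresponding length-one Koszul factor with $[\xi_{k_i}\bb A_\sub{inf}\to\mu\bb A_\sub{inf}]$, which is acyclic because $\mu=\phi^{-k_i}(\mu)\cdot\xi_{k_i}$. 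Propagating this acyclicity through the tensor product defining $K^\blob_\alpha$ is the delicate step, since $L\eta_\mu$ does not commute with tensor products in general; I expect this to be handled by arranging the factors so that $L\eta_\mu$-acyclicity of a single factor forces acyclicity of the total complex. Finally, independence of the identification from the chosen system of $p$-power roots of unity follows from the corresponding statement in \S2.
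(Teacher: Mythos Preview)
Your overall strategy---reduce to $L\eta_\mu$ of the continuous group cohomology of the perfectoid cover, decompose into weight pieces $K^\blob_\alpha$, and identify the integral weights with the $q$-de Rham complex while killing the rest---is exactly the route the paper indicates. The correspondence $U_i\leftrightarrow [T_i^\flat]$ and the identity $[\ep]^{\alpha_i}-1=\mu\cdot[\alpha_i]_q$ for $\alpha_i\in\bb Z$ are the right ingredients.

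Two points deserve attention. First, the assertion that ``$L\eta_\mu$ promotes almost isomorphisms to honest quasi-isomorphisms'' is not true as a general principle: $L\eta_\mu$ does not preserve cones, and one can write down maps of two-term complexes over a DVR with $\mu$-torsion cone for which $L\eta_\mu$ fails to be a quasi-isomorphism (e.g.\ $[\bb Z_p\xrightarrow{p^2}\bb Z_p]\to[\bb Z_p\xrightarrow{p}\bb Z_p]$ with $\mu=p$). The paper therefore does not pass through $L\eta_\mu R\Gamma_\sub{pro\acute et}(X,\bb A_{\sub{inf},X})$ at all here; instead it invokes the proof of Theorem~\ref{theorem_key_almost_iso}, where the comparison $\hat{W_r\Omega_{R/\roi}^\blob}\isoto\cal W_r^\blob(R\Gamma_\sub{cont}(\bb Z_p^d,-))$ is an {\em honest} isomorphism in the torus case, and combines this with the glueing definition of $\bb A\Omega_{R/\roi}$ to obtain a genuine quasi-isomorphism $\bb A\Omega_{R/\roi}\simeq L\eta_\mu R\Gamma_\sub{cont}(\bb Z_p^d,H^0_\sub{pro\acute et}(\tilde X,\bb A_{\sub{inf},X}))$. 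Your route would leave you with only an almost statement unless you supply additional input (e.g.\ that both sides have $W(\frak m^\flat)$-torsion-free cohomology, which ultimately comes from the same explicit computation).

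Second, for the non-integral weights you are making life harder than necessary by trying to propagate acyclicity of one tensor factor through $L\eta_\mu$. A cleaner argument: every cohomology group of the Koszul complex $K^\blob_\alpha=\op{Kos}_{\bb A_\sub{inf}}(f_1,\dots,f_d)$ is annihilated by each $f_i=[\ep^{\alpha_i}]-1$; if $\alpha\notin\bb Z^d$ then some $f_i=\phi^{-k}(\mu)\cdot(\text{unit})$ divides $\mu$, so every $H^j(K^\blob_\alpha)$ is already $\mu$-torsion. Since $H^j(L\eta_\mu C)\cong H^j(C)/H^j(C)[\mu]$, this gives $L\eta_\mu K^\blob_\alpha\simeq 0$ directly, with no need to worry about tensor products.
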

\begin{proof}
It follows from the proof of Theorem \ref{theorem_key_almost_iso}, and the general definition of $\bb A\Omega_{R/\roi}$, that in this case $\bb A\Omega_{R/\roi}$ is quasi-isomorphic (and not just almost so) to \[L\eta_\mu R\Gamma_\sub{cont}(\bb Z_p^d,H^0_\sub{pro\acute et}(\tilde X,\bb A_{\sub{inf},X})).\] This complex can be explicitly calculated and is quasi-isomorphic to $q\op-\Omega^\blob_{\bb A_\sub{inf}\langle U_1^{\pm 1},\dots,U_d^{\pm1}\rangle/\bb A_\sub{inf}}$ ($U_i$ corresponds to the Teichm\"uller lift of $(T_i,T_i^{1/p},T_i^{1/p^2},\dots)\in \tilde R\,^\flat$).
\end{proof}

Our complex $\bb A\Omega_{\frak X/\roi}$ may therefore be interpreted as a natural extension of the $q$-de Rham complex $q\op-\Omega^\blob_{\bb A_\sub{inf}\langle U_1^{\pm 1},\dots,U_d^{\pm1}\rangle/\bb A_\sub{inf}}$, defined initially only for tori $\op{Spf}\roi\langle T_1^{\pm1},\dots,T_d^{\pm1}\rangle$ with a fixed choice of coordinates, to arbitrary smooth, formal $\roi$-schemes $\frak X$.

\section{Some examples}
We give an example illustrating the sharpness of our result, namely that it is possible to have more torsion in crystalline cohomology than in \'etale cohomology.

Although the following does not fall within the scope of our result, as it involves non-smooth stacks, it heuristically explains what can happen:

\begin{example}
	Let $G = \mathbb{Z}/p\bb Z$, and $H = \mu_p$, both viewed as finite flat group schemes over $\mathcal{O}_C$. Choose a map $\eta:G \to H$ that is an isomorphism on the generic fibre and trivial on the special fibre by using a primitive $p^\sub{th}$ root of unity in $C$. Let $T \to BH$ be the universal $H$-torsor and $S=T\times_{BH}BG$ its pullback along $\eta$. Then $S_C$ is a point as it is the universal $G$-torsor, while $S_k \cong BG_k \times \mu_p$ is non-reduced. It is then easy to see that the crystalline cohomology of $S_k$ has a lot of torsion, while the \'etale cohomology of $S_C$ is trivial.
\end{example}

One can easily push the preceding example into the world of proper smooth schemes:

\begin{example}
	Let $p = 2$, and fix an Enriques surface $S/\mathcal{O}$ whose special fibre is ``singular'' in the sense that it has fundamental group $\mathbb{Z}/2\bb Z$ \cite[\S3]{BombieriMumford1976}. Fix also an auxiliary elliptic curve $E/\mathcal{O}$, and choose a map $\mathbb{Z}/2\bb Z \to E$ that is non-trivial on the generic fibre and trivial on the special fibre. Pushing out the universal cover of $S$ along this map gives an $E$-torsor $D \to S$ whose generic fibre $D_C \to S_C$ is non-split, i.e., does not admit a section, but whose special fibre $D_k \to S_k$ is split.
	
Thus $D$ is a proper smooth $\mathcal{O}$-scheme of relative dimension $3$ and one can show, using the Leray spectral sequence for $D \to X$, that $H^2_\sub{\acute et}(D_C,\mathbb{Z}_2)$ is torsion free while $H^2_\sub{crys}(D_k/W(k))$ has non-trivial $2$-torsion.
\end{example}

\bibliographystyle{acm}
\bibliography{../../Bibliography}

\def\cprime{$'$}
\begin{thebibliography}{10}

\bibitem{BerthelotOgus1978}
{\sc Berthelot, P., and Ogus, A.}
\newblock {\em Notes on crystalline cohomology}.
\newblock Princeton University Press, Princeton, N.J.; University of Tokyo
  Press, Tokyo, 1978.

\bibitem{BombieriMumford1976}
{\sc {Bombieri}, E., and {Mumford}, D.}
\newblock {Enriques' classification of surfaces in char p. III.}
\newblock {\em {Invent. Math.} 35\/} (1976), 197--232.

\bibitem{Caruso2008}
{\sc Caruso, X.}
\newblock Conjecture de l'inertie mod\'er\'ee de {S}erre.
\newblock {\em Invent. Math. 171}, 3 (2008), 629--699.

\bibitem{Deligne1971}
{\sc Deligne, P.}
\newblock Th\'eorie de {H}odge. {II}.
\newblock {\em Inst. Hautes \'Etudes Sci. Publ. Math.}, 40 (1971), 5--57.

\bibitem{Faltings1988}
{\sc Faltings, G.}
\newblock {$p$}-adic {H}odge theory.
\newblock {\em J. Amer. Math. Soc. 1}, 1 (1988), 255--299.

\bibitem{Faltings1999}
{\sc Faltings, G.}
\newblock Integral crystalline cohomology over very ramified valuation rings.
\newblock {\em J. Amer. Math. Soc. 12}, 1 (1999), 117--144.

\bibitem{Fargues2015}
{\sc Fargues, L.}
\newblock Quelques r\'esultats et conjectures concernant la courbe.
\newblock {\em Ast\'erisque}, 369 (2015), 325--374.
\newblock In {\em De la g\'eom\'etrie alg\'ebrique aux formes automorphes
  ({I})}.

\bibitem{Fontaine1994}
{\sc {Fontaine}, J.-M.}
\newblock {Expos\'e II: Les corps des p\'eriodes $p$-adiques. Avec un appendice
  par Pierre Colmez: Le nombres alg\'ebriques sont denses dans $B_{dR}^+$.}
\newblock In {\em {P\'eriodes $p$-adiques. S\'eminaire du Bures-sur-Yvette,
  France, 1988.}} Paris: Soci\'et\'e Math\'ematique de France, 1994,
  pp.~59--111, appendix 103--111.

\bibitem{Huber1996}
{\sc Huber, R.}
\newblock {\em \'{E}tale cohomology of rigid analytic varieties and adic
  spaces}.
\newblock Aspects of Mathematics, E30. Friedr. Vieweg \& Sohn, Braunschweig,
  1996.

\bibitem{Illusie1979}
{\sc Illusie, L.}
\newblock Complexe de de\thinspace {R}ham-{W}itt et cohomologie cristalline.
\newblock {\em Ann. Sci. \'Ecole Norm. Sup. (4) 12}, 4 (1979), 501--661.

\bibitem{LanSuh2012}
{\sc Lan, K.-W., and Suh, J.}
\newblock Vanishing theorems for torsion automorphic sheaves on compact
  {PEL}-type {S}himura varieties.
\newblock {\em Duke Math. J. 161}, 6 (2012), 1113--1170.

\bibitem{LangerZink2004}
{\sc Langer, A., and Zink, T.}
\newblock De {R}ham-{W}itt cohomology for a proper and smooth morphism.
\newblock {\em J. Inst. Math. Jussieu 3}, 2 (2004), 231--314.

\bibitem{Scholze2012}
{\sc Scholze, P.}
\newblock Perfectoid spaces.
\newblock {\em Publ. Math. Inst. Hautes \'Etudes Sci. 116\/} (2012), 245--313.

\bibitem{Scholze2013}
{\sc Scholze, P.}
\newblock {$p$}-adic {H}odge theory for rigid-analytic varieties.
\newblock {\em Forum Math. Pi 1\/} (2013), e1, 77.

\end{thebibliography}

\end{document}